\newcommand{\cA}{\mathcal{A}}
\newcommand{\cZ}{\mathcal{Z}}
\newcommand{\cP}{\mathcal{P}}
\newcommand{\tors}{\mathrm{tors}}
\DeclareMathOperator{\conv}{conv}
\DeclareMathOperator{\spanop}{span}
\title{Hypergraphic zonotopes and acyclohedra}
\author[Cosmin Pohoata]{Cosmin Pohoata\nfts{1}}
\address{\nfts{1}Department of Mathematics, Emory University, Atlanta, GA 30322, USA}
\email{cosmin.pohoata@emory.edu}
\author[Daniel G.\ Zhu]{Daniel G.\ Zhu\nfts{2}}
\address{\nfts{2}Department of Mathematics, Princeton University, Princeton, NJ 08544, USA}
\email{zhd@princeton.edu}
\begin{document}
\begin{abstract}
    We introduce a higher-uniformity analogue of graphic zonotopes and permutohedra. Specifically, given a $(d+1)$-uniform hypergraph $H$, we define its \vocab{hypergraphic zonotope} $\cZ_H$, and when $H$ is the complete $(d+1)$-uniform hypergraph $K^{(d+1)}_n$, we call its hypergraphic zonotope the \vocab{acyclohedron} $\cA_{n,d}$. 
    
    We express the volume of $\cZ_H$ as a homologically weighted count of the spanning $d$-dimensional hypertrees of $H$, which is closely related to Kalai's generalization of Cayley's theorem in the case when $H=K^{(d+1)}_n$ (but which, curiously, is not the same). We also relate the vertices of hypergraphic zonotopes to a notion of acyclic orientations previously studied by Linial and Morganstern for complete hypergraphs.
\end{abstract}

\maketitle

\section{Introduction}
One interesting direction within combinatorics is the study of higher-uniformity analogues of standard objects and theorems in graph theory. Specifically, a graph can be viewed as a $1$-dimensional simplicial complex, and many properties of graphs, most notably connectivity and acyclicity, can be viewed purely topologically. In one of the earliest works in this area, Kalai \cite{Kalai83} generalized these ideas to $d$-dimensional complexes, defining a {\textit{$d$-dimensional hypertree}} to be a $d$-dimensional simplicial complex with a full $(d-1)$-skeleton such that both $\tilde H_{d-1}(T;\setr)=0$ (generalizing connectivity) and $\tilde H_{d}(T;\setr)=0$ (generalizing acyclicity). It is then natural to ask whether one can count the number of such $d$-dimensional hypertrees on $n$ labeled vertices, or equivalently, the number of spanning $d$-dimensional hypertrees of the complete $(d+1)$-uniform hypergraph of order $n$. In \cite{Kalai83}, Kalai found a beautiful formula for this count, but with a somewhat mysterious homological weighting:
\begin{thm}[Kalai \cite{Kalai83}] \label{thm:kalai}
If $n$ and $d$ are positive integers and $\mathcal{T}(n,d)$ is the set of $d$-dimensional hypertrees on $n$ labeled vertices, then
\[\sum_{T\in\mathcal{T}(n,d)} \abs{\tilde H_{d-1}(T; \setz)}^2 = n^{\binom{n-2}{d}}.\]
\end{thm}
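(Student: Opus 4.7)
My plan is to prove this via a higher-dimensional analogue of the Kirchhoff--Trent matrix-tree theorem, which is precisely how Kalai originally established the formula. Fix the complete complex $X = K^{(d+1)}_n$, viewed as the $d$-skeleton of the $(n-1)$-simplex, and consider its boundary operator $\partial_d\colon C_d(X;\setz)\to C_{d-1}(X;\setz)$. The first step is to pick a reference spanning $(d-1)$-dimensional hypertree $T_0$ (for instance, all $(d-1)$-faces containing the vertex $n$) and form the matrix $M$ obtained by restricting $\partial_d$ to $C_{d-1}(T_0;\setz)$. Then $L=MM^\top$ will play the role of the reduced Laplacian.

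The second step is to expand $\det L$ via the Cauchy--Binet formula as a sum over subsets $S$ of $d$-faces of size $\binom{n-1}{d}$ (the rank), giving
\[\det L = \sum_{|S| = \binom{n-1}{d}} \det(M|_S)^2.\]
The crucial combinatorial input is to show that $\det(M|_S)\neq 0$ precisely when the subcomplex $T$ consisting of $S$ together with the full $(d-1)$-skeleton is a $d$-dimensional hypertree, and in that case $\abs{\det(M|_S)} = \abs{\tilde H_{d-1}(T;\setz)}$. This is a standard rank/torsion calculation: the Smith normal form of $M|_S$ reads off the torsion of $\tilde H_{d-1}(T)$, while acyclicity in the top degree ensures the determinant is nonzero. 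Summing over all spanning hypertrees of $K^{(d+1)}_n$ converts $\det L$ into the desired left-hand side.

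The third step is to compute $\det L$ directly as $n^{\binom{n-2}{d}}$. The cleanest route is spectral: the $S_n$-symmetry of the complete complex forces the combinatorial Laplacian $\partial_d^\top \partial_d + \partial_{d+1}\partial_{d+1}^\top$ on $C_d(X;\setr)$ to act by scalars on each isotypic component, and because the $d$-skeleton of the simplex is acyclic in degree $d$, one can read off that the up-down Laplacian $\partial_d^\top \partial_d$ has all its nonzero eigenvalues equal to $n$, with multiplicity $\binom{n-1}{d}$. Passing to the reduced square matrix $L$ (which kills exactly the kernel directions corresponding to the reference tree $T_0$) yields $\det L = n^{\binom{n-1}{d} - (\text{one factor per column of }T_0)}$; a careful bookkeeping of the rank pins the exponent down to $\binom{n-2}{d}$.

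The main obstacle I anticipate is this final exponent computation. The spectral argument naturally produces $n^{\binom{n-1}{d}}$, and getting down to $n^{\binom{n-2}{d}}$ requires correctly accounting for the cokernel contribution from reducing by the reference $(d-1)$-tree $T_0$ — equivalently, for the $\binom{n-1}{d-1}$ trivially vanishing rows one removes. An inductive argument in $d$, or a direct appeal to the fact that $\partial_d$ restricted to faces not containing $n$ is injective with cokernel of order $n^{\binom{n-2}{d}}$ (this is the shellable/collapsible structure of the simplex relative to a vertex), should close the gap cleanly.
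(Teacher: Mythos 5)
The paper does not prove this statement---it is quoted from Kalai with a citation and used as a black box---so there is no internal proof to compare against; I am assessing your sketch against Kalai's known argument, whose overall architecture (a simplicial matrix--tree theorem, Cauchy--Binet, and then an eigenvalue computation) you have correctly identified. Steps one and two are essentially sound, with two caveats. First, you should \emph{delete} the rows indexed by $T_0$ and keep the $\binom{n}{d}-\binom{n-1}{d-1}=\binom{n-1}{d}$ faces \emph{not} containing vertex $n$, rather than ``restricting $\partial_d$ to $C_{d-1}(T_0;\setz)$''; your own later count of the ``$\binom{n-1}{d-1}$ rows one removes'' shows this is what you intend, but as written the matrix has the wrong shape. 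Second, identifying $\abs{\det(M|_S)}$ with $\abs{\tilde H_{d-1}(T_S;\setz)}$ is not purely a Smith-normal-form observation: the cokernel of $M|_S$ is $C_{d-1}(\setz)/\bigl(B_{d-1}(T_S;\setz)+C_{d-1}(T_0;\setz)\bigr)$, and one must check that $C_{d-1}(T_0;\setz)\oplus Z_{d-1}(\setz)=C_{d-1}(\setz)$ \emph{integrally} (true because $T_0$ is a collapsible cone) before this quotient can be identified with $Z_{d-1}(\setz)/B_{d-1}(T_S;\setz)=\tilde H_{d-1}(T_S;\setz)$.

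The genuine gap is your third step. The spectral argument does show that all nonzero eigenvalues of $\partial_d\partial_d^\top$ on $C_{d-1}$ equal $n$, with multiplicity $\binom{n-1}{d}$, so its pseudo-determinant is $n^{\binom{n-1}{d}}$. But $L=MM^\top$ is a \emph{principal minor} of $\partial_d\partial_d^\top$, and the standard identity gives $\det L=n^{\binom{n-1}{d}}\cdot\det\bigl(P|_{T_0\times T_0}\bigr)$, where $P$ is the orthogonal projection onto $\ker(\partial_d\partial_d^\top)=Z_{d-1}(\setr)^{\perp}$. One must therefore prove $\det\bigl(P|_{T_0\times T_0}\bigr)=n^{-\binom{n-2}{d-1}}$ (using $\binom{n-1}{d}-\binom{n-2}{d-1}=\binom{n-2}{d}$); this is a computation of the same character as the theorem itself, it is where the induction on $d$ actually lives, and ``careful bookkeeping of the rank'' cannot supply it, since rank data alone does not distinguish the exponent $\binom{n-1}{d}$ from $\binom{n-2}{d}$. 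Your proposed fallback---that $\partial_d$ restricted to the $d$-faces not containing $n$ is injective with cokernel of order $n^{\binom{n-2}{d}}$---is false as stated: there are $\binom{n-1}{d+1}$ such faces, which generally exceeds the rank $\binom{n-1}{d}$ of $\partial_d$, so this restriction is not even injective (already for $d=1$ and $n\geq 4$). Until the identity $\det L=n^{\binom{n-2}{d}}$ is actually established, the argument is incomplete at precisely its most delicate point.
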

Since $\tilde H_0(T; \setz) = 0$ for any $1$-dimensional hypertree, \cref{thm:kalai} generalizes Cayley's formula for the number of trees on $n$ labeled vertices.

A second interesting direction within combinatorics is the study of polytopes that encode combinatorial structures, which can often provide a novel perspective on various phenomena. The quintessential example is the $n$-dimensional hypercube, whose geometry encodes many of the combinatorial properties of the subsets of $\left\{1, \dots, n\right\}$. Another fundamental example is the matching polytope of a given graph, whose geometric features can explain all the known properties of the graph's matchings (for example, whether a graph has a perfect matching or not). One other classical example is the \emph{graphic zonotope} $\cZ_G$ of a graph $G=(V,E)$ with $V=\left\{v_1,\ldots,v_n\right\}$, which is the polytope in $\mathbb{R}^{n}$ defined to be the Minkowski sum of segments which correspond to edges of the graph $G$. Formally, 
\[
    \cZ_G = \sum_{\set{v_i,v_j} \in E} \conv(\left\{e_{i},e_{j}\right\}).
\]
Here and henceforth, $\conv(S)$ will denote the convex hull of a finite set of points $S$. Remarkably, the volume and lattice points
of $\cZ_G$ encode the number of spanning trees and forests in $G$.

\begin{thm}[{see e.g.\ Stanley \cite[Ex.~4.32]{Stanley_1997} and Postnikov \cite[Prop.~2.4]{Postnikov}}]\label{zonoo}
    For a connected graph $G$ on $n$ vertices, the $(n-1)$-dimensional volume of the graphical zonotope $\cZ_G$ equals the number of spanning trees of $G$. The number of lattice points of $\cZ_G$ equals the number of forests in the graph $G$.
\end{thm}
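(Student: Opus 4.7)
The plan is to reduce everything to well-known facts about zonotopes. First, I would translate each summand to the origin via the identity $\conv(\{e_i,e_j\}) = e_i + \conv(\{0,e_j-e_i\})$, obtaining
\[\cZ_G = \sum_{\{v_i,v_j\}\in E} e_i \;+\; Z(E), \qquad Z(E) := \sum_{\{v_i,v_j\}\in E,\, i<j} [0,\,e_j-e_i].\]
The translation vector is integral, so both the volume and the lattice point count are unchanged; it therefore suffices to prove the statements for $Z(E)$. Note that $Z(E)$ lies in the hyperplane $\sum_k x_k = c$ for some integer $c$, parallel to the $(n-1)$-dimensional integer sublattice $\{\sum x_k = 0\}$, which justifies measuring volume in dimension $n-1$ when $G$ is connected.

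Next, I would invoke the classical volume formula for zonotopes: if $Z = \sum_i [0,v_i]$ in a $d$-dimensional real vector space, then
\[\vol(Z) = \sum_{\substack{I\subseteq[m] \\ |I|=d,\ \{v_i\}_{i\in I}\ \text{basis}}} \abs{\det(v_i)_{i\in I}}.\]
Applied to $\{e_j - e_i : \{v_i,v_j\}\in E\}$, the image of this map is the (reduced) signed vertex–edge incidence matrix of an orientation of $G$. A collection of $n-1$ columns is linearly independent in the quotient by $\spanop(e_1+\cdots+e_n)$ if and only if the corresponding edges form a spanning tree of $G$, so the bases index exactly the spanning trees of $G$. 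Total unimodularity of the incidence matrix guarantees each determinant is $\pm 1$, so the sum counts spanning trees.

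For the lattice point count, I would use Shephard's half-open parallelepiped decomposition: every lattice zonotope $Z = \sum_i [0,v_i]$ admits a disjoint decomposition
\[Z = \bigsqcup_{I\ \text{linearly independent}} \Pi_I^\circ,\]
where $\Pi_I^\circ$ is a half-open parallelepiped generated by $\{v_i\}_{i\in I}$. When the generating vectors are totally unimodular (as holds for $e_j - e_i$), each $\Pi_I^\circ$ contains exactly one lattice point of the ambient lattice $\setz^n \cap \{\sum x_k = c\}$. Hence the number of lattice points of $Z(E)$ equals the number of linearly independent subsets of $\{e_j - e_i\}_{\{v_i,v_j\}\in E}$; by the matroid-theoretic description of the graphic matroid, these are exactly the (spanning) forests in $G$.

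The main obstacle is not a conceptual one, since both statements follow from standard facts once the right dictionary is in place; the only delicate point is being careful about the intrinsic lattice on the affine hyperplane containing $\cZ_G$, so that the half-open tiling produces exactly one lattice point per tile. I would therefore spend a sentence verifying that the lattice $\setz^n \cap \{\sum x_k = c\}$ is unimodularly spanned by any spanning tree of edge vectors, which is the unimodularity statement used above.
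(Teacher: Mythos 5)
Your proof is correct, and it is essentially the route the paper takes: the statement itself is only cited there, but the paper's proofs of its generalizations (\cref{thm:zehr} and \cref{thm:main}) follow exactly your template --- Stanley's zonotope formula (\cref{thm:ehr}), whose lattice indices $h(X)$ are what your unimodularity observation forces to equal $1$ in the graphic case, with linearly independent subsets identified with (spanning) forests and the volume coming from the top-degree parallelepipeds. The only cosmetic difference is that you extract the volume directly from the determinant formula rather than as the leading coefficient of the Ehrhart polynomial, and your closing remark about the intrinsic lattice on the hyperplane $\sum_k x_k = c$ is indeed the one normalization point that needs to be said aloud.
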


Moreover, the graphic zonotope of the complete graph $K_n$ is the \emph{$n$-permutohedron}
\[\Pi_{n} = \operatorname{conv}\paren[\big]{\setmid[\big]{(\sigma(0),\ldots,\sigma(n-1)) \in \mathbb{R}^{n}}{\sigma \in S_{n}}},\]
a polytope in $\mathbb{R}^{n}$ whose vertices correspond to permutations $S_n$ of the set $\left\{0,1\ldots,n-1\right\}$. (For a proof, see e.g.\ Postnikov \cite[Proposition 2.3]{Postnikov}, where an elegant proof using Newton polytopes is given.) The permutohedron $\Pi_{n}$ lives inside the hyperplane $\setmid{x \in \mathbb{R}^{n}}{x_1+\ldots+x_n = \binom{n}{2}}$, and \cref{zonoo} says that its $(n-1)$-dimensional volume (relative to this hyperplane) equals precisely $n^{n-2}$, the number of spanning trees of the complete graph $K_{n}$.  

This short paper combines these two directions, by defining higher uniformity analogues of graphic zonotopes and permutohedra. Specifically, given a $(d+1)$-uniform hypergraph $H$, we define its \vocab{hypergraphic zonotope} $\cZ_H$, and when $H$ is the complete $(d+1)$-uniform hypergraph $K^{(d+1)}_n$, we call its hypergraphic zonotope an \vocab{acyclohedron} $\cA_{n,d}$. Our main results are generalizations of the two properties of graphic zonotopes mentioned above. In particular, we will first express the volume of $\cZ_H$ as a weighted sum over the spanning hypertrees of $H$, which in the case of the acyclohedron $\cA_{n,d}$ is a weighted sum over $d$-dimensional hypertrees on $n$ labeled vertices. Curiously, these weights are closely related but not identical to those in \cref{thm:kalai}; instead of $\abs{\tilde H_{d-1}(T)}^2$, we instead sum $\abs{\tilde H_{d-1}(T)}$. 

\begin{thm} \label{thm:main}
The volume of $\cZ_H$ is the sum of $\abs{\tilde H_{d-1}(T; \setz)}$, over all spanning hypertrees $T$ of $H$.
\end{thm}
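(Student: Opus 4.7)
The plan is to compute $\mathrm{vol}(\cZ_H)$ by realizing $\cZ_H$ as a Minkowski sum of standard simplices and then extracting a spanning-hypertree sum via mixed-volume analysis together with a Cauchy--Binet-type identity for boundary-map minors.

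First, I would write $\cZ_H = \sum_{e \in H} \Delta_e$, where $\Delta_e = \conv\{e_i : i \in e\}$ is the $d$-dimensional standard simplex on the coordinates in $e$. This Minkowski sum lies in the hyperplane $\sum_i x_i = \abs{H}$ and is $(n-1)$-dimensional whenever $H$ is appropriately connected (otherwise both sides of the theorem vanish, so we may assume this). Applying the standard Minkowski mixed-volume expansion,
\[\mathrm{vol}_{n-1}(\cZ_H) = \sum_{\mathbf m}\binom{n-1}{\mathbf m}\,V(\Delta^{(\mathbf m)}),\]
summed over tuples $\mathbf m = (m_e)_{e \in H}$ of nonnegative integers with $\sum_e m_e = n-1$, where $V(\Delta^{(\mathbf m)})$ is the mixed volume of the $\Delta_e$ taken with multiplicities $m_e$.

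Next, I would compute each mixed volume of standard simplices as (up to combinatorial factors) a sum of absolute values of minors of the simplicial boundary operator $\partial_d$ restricted to the sub-hypergraph supporting $\mathbf m$. A Cauchy--Binet-type regrouping then collects contributions by their support: the multinomial coefficients and the contributions from multisets with some $m_e \geq 2$ should combine so that only spanning hypertrees $T$ of $H$ survive. For such a $T$, the sum should telescope to $\abs{\tilde H_{d-1}(T;\setz)}$, via the standard identity expressing this torsion order as the absolute determinant of the square minor of $\partial_d$ indexed by the $d$-faces of $T$.

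The main obstacle is the bookkeeping in the second step: translating mixed volumes of simplices into boundary-map minors and then isolating the spanning-hypertree contributions, while carefully accounting for multisets with repeated hyperedges. These repeated contributions are inert in the graphic case $d = 1$ (two copies of the same segment span only one dimension), but for $d \geq 2$ a single simplex can be used with multiplicity up to $d$ and still contribute nontrivially. The fact that the final answer is $\abs{\tilde H_{d-1}(T;\setz)}$ rather than its square is itself a guiding hint: Kalai's formula arises from $\partial_d\partial_d^\top$ via Cauchy--Binet and hence yields the \emph{square} of the torsion order, whereas our volume computation uses $\partial_d$ alone and so should produce only the unsquared quantity.
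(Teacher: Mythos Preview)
Your very first step misidentifies the polytope. By definition, $\cZ_H$ is a \emph{zonotope}: it is the Minkowski sum of the \emph{line segments} $\conv\{0,\partial_d((v_0\cdots v_d))\}$ inside $Z_{d-1}(H;\setr)$, a space of dimension $\binom{n-1}{d}$. It is not the sum $\sum_{e\in E(H)}\Delta_e$ of $d$-dimensional coordinate simplices in $\setr^n$; for $d\ge 2$ the two constructions live in different ambient spaces and have different dimensions. The coincidence you are relying on is special to $d=1$, where the segment $\conv\{0,e_i-e_j\}$ happens to be a translate of the $1$-simplex $\conv\{e_i,e_j\}$.

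This misreading propagates through the rest of the sketch. For a genuine zonotope the mixed-volume expansion has no ``repeated simplex'' subtleties: a line segment taken with multiplicity at least $2$ contributes zero mixed volume, so the expansion collapses immediately to
\[
\mathrm{vol}(\cZ_H)=\sum_{S}\abs{\det(\beta_e:e\in S)}
\]
over $\binom{n-1}{d}$-element linearly independent subsets $S\subseteq E(H)$, which is precisely the leading coefficient of Stanley's Ehrhart formula that the paper invokes. The actual content of the argument is then to identify these maximal independent subsets with spanning hypertrees $T$ and to show that the resulting lattice index equals $\abs{\tilde H_{d-1}(T;\setz)}$. Your last paragraph gestures at this identification, but the route through mixed volumes of $d$-simplices and multiset bookkeeping is aimed at a different polytope (a generalized permutohedron in $\setr^n$) and would not compute $\mathrm{vol}(\cZ_H)$.
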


Our second result relates the vertices of hypergraphic zonotopes to a notion of acyclic orientations previously studied by Linial and Morganstern \cite{LinMor} for complete hypergraphs. Thus, these results demonstrate a previously unknown relation between the hypertrees of Kalai and the hypertournaments of Linial and Morganstern. This relation suggests that even in uniformity two it is sometimes more natural to regard the vertices of $\Pi_{n}$ as acyclic orientations of the complete graph on $n$ vertices, rather than permutations of $\left\{0,1,\ldots,n-1\right\}$.

Despite a steady stream of recent works, many basic questions in high-dimensional/high-uniformity combinatorics remain unanswered, and it is our hope that this polytopal perspective could lead to a broader understanding of the area.

\section{Definitions}
A \vocab{simplicial complex} $X$ consists of a set of \vocab{vertices} $V(X)$ (which we will always take to be finite) and a set $S(X)$ of \vocab{simplices}, nonempty subsets of $V(X)$, such that
\begin{itemize}
    \item if $\sigma \in S(X)$ and $\tau$ is a nonempty subset of $\sigma$, then $\tau \in S(X)$;
    \item if $v\in V(X)$, then $\set{v} \in S(X)$.
\end{itemize}
Let $X_k$ denote the simplices in $X$ with $k+1$ vertices. For $k \geq 0$ and a commutative ring $R$ (which in this paper will always be $\setz$ or $\setr$) define the space of \emph{$k$-chains} $C_k(X; R)$ to be the $R$-module freely generated by symbols of the form $(v_0v_1\cdots v_k)$ for $\set{v_0, v_1,\ldots,v_k} \in X_{k+1}$, subject to the relation that for any permutation $\pi$ of $\set{0,1,\ldots,k}$, we have $(v_{\pi(0)}v_{\pi(1)}\cdots v_{\pi(k)}) = \sgn\pi \cdot  (v_0v_1\cdots v_k)$. Let $C^k(X;R)$, the space of \emph{$k$-cochains}, be the dual of $C_k(X;R)$ as $R$-modules. We will treat $C_k(X;\setz)$ and $C^k(X;\setz)$ as subspaces of $C_k(X;\setr)$ and $C^k(X;\setr)$, respectively.

We now define reduced homology.
If $k \geq 1$, define the linear map $\partial_k\colon C_k(X;R) \to C_{k-1}(X;R)$ given by
\[\partial_k((v_0v_1\cdots v_k)) = \sum_{i=0}^k (-1)^i (v_0\cdots \hat v_i \cdots v_k),\]
where hats denote omission. Furthermore, let $\partial_0 \colon C_0(X; R) \to R$ be such that $\partial_0((v)) = 1$ for $v \in X_0$. It is well-known (and easily shown) that $\partial_k$ is well-defined and that $\partial_{k-1} \circ \partial_k = 0$. Thus, the space of \vocab{boundaries} $B_k(X; R) = \im \partial_{k+1}$ is contained in the space of \vocab{(reduced) cycles} $Z_k(X; R) = \ker \partial_k$, so we may define the \vocab{reduced homology group} $\tilde H_k(X; R) = Z_k(X; R)/B_k(X; R)$. For $k \geq 1$, the map $\partial_k$ also induces a dual map $d_{k-1} \colon C^{k-1}(X; R) \to C^k(X; R)$, and analogously we define the space of \vocab{coboundaries} $B^k(X; R) = \im d_{k-1}$.
\begin{rmk}
The standard homology group $H_k(X; R)$ is defined in the same way except that $\partial_0$ is the zero map. This has the consequence that $\tilde H_k(X; R) \cong H_k(X; R)$ for $k \geq 1$ and $\tilde H_0(X; R) \oplus R \cong H_0(X; R)$. However, in this paper we will only ever use reduced homology.
\end{rmk}

Given a $(d+1)$-uniform hypergraph $H$, we may consider it as a simplicial complex with vertex set $V(H)$ and simplices given by all edges of $H$, along with every nonempty subset of $V(H)$ with size at most $d$. We now make the central definition of this paper.
\begin{defn}
The \vocab{hypergraphic zonotope} $\cZ_H\subseteq Z_{d-1}(X; \setr)$ of a $(d+1)$-uniform hypergraph $H$ is the Minkowski sum of the line segments $\conv(\set{0, \partial_d ((v_0v_1\cdots v_d))})$ over all edges $\set{v_0,v_1,\ldots,v_d} \in E(H)$.\footnote{Each edge is only considered once in the sum, with an arbitrary ordering chosen for its vertices. The resulting polytope is only well-defined up to translation by $Z_{d-1}(X;\setz)$.} In the special case when $H$ is the complete $(d+1)$-uniform hypergraph on $n$ vertices, we call $\cZ_H$ an \vocab{acyclohedron} and denote it as $\cA_{n,d}$.    
\end{defn}
Note that when $d = 1$, these notions reduce to the notions of a graphical zonotope and a permutohedron. 

\begin{examp}
The acyclohedron $\cA_{4,2}$ is the polytope given by
\[\conv(\set{0, u_1}) + \conv(\set{0, u_2}) + \conv(\set{0, u_3}) + \conv(\set{0, u_4})\]
where
\begin{align*}
u_1 &=  (v_1v_2)-(v_1v_3)+(v_2v_3) &
u_3 &=  (v_1v_3)-(v_1v_4)+(v_3v_4) \\
u_2 &=  (v_1v_2)-(v_1v_4)+(v_2v_4) &
u_4 &=  (v_2v_3)-(v_2v_4)+(v_3v_4).
\end{align*}
It can be shown that $u_1,u_2,u_3,u_4$ span the $3$-dimensional space $Z_1(K^{(3)}_4;\setr)$, with their only linear relation being $u_1 - u_2 + u_3 - u_4 = 0$. It follows that $\cA_{4,2}$ is a rhombic dodecahedron.
\end{examp}

\section{Hyperforests and the Ehrhart Polynomial}
Given an integral $d$-dimensional polytope $\cP$ in $\mathbb{R}^{n}$ (a polytope whose vertices are points in the $\mathbb{Z}^{n}$ lattice), one important construction is the \vocab{Ehrhart polynomial} $L(\cP, t)$, which, for every integer $t \geq 1$, counts the number of lattice points in the dilate $t\cP$. In \cite{Ehr62}, Ehrhart showed that this quantity is a rational polynomial of degree $d$ in $t$, i.e. there exist rational numbers $L_{0}(\cP),\ldots,L_{d}(\cP)$ such that
\[L(\cP, t)= L_{d}(\cP) t^{d} + L_{d-1}(\cP) t^{d-1} + \ldots + L_{0}(\cP)\]
for all positive integers $t$. 

In this section, we will express the Ehrhart polynomial of $\cZ_H$ in terms of hypertrees and hyperforests. To set things up, observe that $\cZ_H$ lies in $Z_{d-1}(H; \setr)$ and has vertices in $Z_{d-1}(H; \setz)$. As $Z_{d-1}(H; \setz)$ is a lattice within $Z_{d-1}(H; \setr)$, we will treat it as the set of integral points. In particular, we normalize the volume on $Z_{d-1}(H; \setr)$ such that the volume of $Z_{d-1}(H; \setr)/Z_{d-1}(H; \setz)$ is $1$. Furthermore, we define a \vocab{hyperforest} to be a $(d+1)$-uniform hypergraph $H$ with $\tilde H_d(H;\setr) = 0$; recall from the introduction that a hypertree is a hyperforest such that $\tilde H_{d-1}(H;\setr) = 0$. Given a $(d+1)$-uniform hypergraph $H$ define a \vocab{spanning hyperforest} and a \vocab{spanning hypertree} to be a hyperforest and hypertree, respectively, that is contained within $H$ and also has the same vertex set.

\begin{thm} \label{thm:zehr}
We have
\[L(\cZ_H, t) = \sum_{F \subseteq H} \abs{\tilde H_{d-1}(F; \setz)_\tors} \,t^{\abs{E(F)}}.\]
where the sum is over spanning hyperforests $F$ of $H$ and $A_\tors$ denotes the torsion elements of an abelian group $A$.
\end{thm}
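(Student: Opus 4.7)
The plan is to reduce \cref{thm:zehr} to Stanley's classical Ehrhart formula for lattice zonotopes, and then reinterpret its ingredients through the simplicial chain complex of $H$. Recall that for a zonotope $Z = \sum_{i=1}^{k}[0, v_i]$ in a real vector space $V$ equipped with a lattice $\Lambda$ such that each $v_i \in \Lambda$, one has
\[L(Z,t) = \sum_{S} \bigl[\Lambda \cap \spanop_{\setr}(S) : \spanop_{\setz}(S)\bigr] \, t^{\abs{S}},\]
the sum being taken over all $\setr$-linearly independent subsets $S$ of $\{v_1,\dots,v_k\}$. I would apply this with $V = Z_{d-1}(H;\setr)$, $\Lambda = Z_{d-1}(H;\setz)$, and generating vectors $w_\sigma = \partial_d((v_0 v_1 \cdots v_d))$ indexed by the edges $\sigma = \{v_0,\dots,v_d\} \in E(H)$.

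First I would match linearly independent subsets of the $w_\sigma$ with spanning hyperforests. For a spanning subhypergraph $F \subseteq H$, the set $\{w_\sigma : \sigma \in E(F)\}$ is $\setr$-linearly independent iff $\partial_d$ is injective on $C_d(F;\setr)$; since $F$ has no $(d+1)$-cells, that kernel coincides with $Z_d(F;\setr) = \tilde H_d(F;\setr)$, so linear independence is exactly the hyperforest condition $\tilde H_d(F;\setr) = 0$.

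Next I would compute the lattice index for such an $S$. By construction $\spanop_\setr(S) = B_{d-1}(F;\setr)$ and $\spanop_\setz(S) = B_{d-1}(F;\setz)$, and because $H$ and $F$ share the same $(d-1)$-skeleton (namely all $d$-subsets of $V(H)$), they also share the chain groups $C_{d-1}$ and $C_{d-2}$, so $Z_{d-1}(H;\setz) = Z_{d-1}(F;\setz)$. The index therefore simplifies to $[Z_{d-1}(F;\setz) \cap B_{d-1}(F;\setr) : B_{d-1}(F;\setz)]$. An integer cycle lies in $B_{d-1}(F;\setr)$ iff some positive integer multiple of it lies in $B_{d-1}(F;\setz)$, i.e.\ iff its class in $\tilde H_{d-1}(F;\setz)$ is torsion, so the numerator is precisely the preimage of $\tilde H_{d-1}(F;\setz)_\tors$ under $Z_{d-1}(F;\setz) \to \tilde H_{d-1}(F;\setz)$, whence the index equals $\abs{\tilde H_{d-1}(F;\setz)_\tors}$. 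Plugging back into Stanley's formula yields the desired identity.

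I do not anticipate any serious obstacle: once Stanley's formula is invoked, every subsequent step is routine. The one piece requiring a little care is observing that the completeness of the $(d-1)$-skeleton of $H$ keeps $Z_{d-1}(F;\setz)$ independent of $F$, which is what permits the clean identification of the lattice index with the torsion count.
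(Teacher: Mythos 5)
Your proof is correct and follows essentially the same route as the paper: invoke Stanley's Ehrhart formula for zonotopes, identify the linearly independent subsets of the generators with spanning hyperforests via the vanishing of $\tilde H_d(F;\setr)$, and compute the lattice index as $\abs{\tilde H_{d-1}(F;\setz)_\tors}$ using the fact that the saturation of $B_{d-1}(F;\setz)$ inside $Z_{d-1}(H;\setz)$ consists exactly of the cycles whose homology classes are torsion. Your explicit remark that the full $(d-1)$-skeleton forces $Z_{d-1}(F;\setz) = Z_{d-1}(H;\setz)$ is the same observation the paper uses implicitly in its chain of equalities.
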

\begin{proof}
We recall a classical result on the Ehrhart polynomials of zonotopes.
\begin{thm}[{\cite[Thm.~2.2]{Standegseq}}] \label{thm:ehr}
If $\cZ$ is the Minkowski sum of the line segments $\conv(\set{0, \beta_i})$ for $\beta_1, \beta_2, \ldots, \beta_r \in \setz^n$, then
\[L(\cZ, t) = \sum_X h(X) t^{\abs{X}},\]
where the sum is over linearly independent subsets of the multiset $\set{\beta_1,\ldots,\beta_r}$ and $h(X) = [\Lambda : \Gamma]$, where $\Gamma$ is the lattice generated by $X$ and $\Lambda = (\Gamma \otimes \setr) \cap \setz^n$ is the maximal integral lattice of dimension $\abs{X}$ containing $\Gamma$.
\end{thm}

The subsets $X$ in \cref{thm:ehr} correspond exactly to the spanning hyperforests of $H$, since $\tilde H_d(F, \setr) = 0$ if and only if the vectors $\partial_d((v_0v_1\cdots v_d))$ for $\set{v_0,v_1,\ldots,v_d} \in E(F)$ are linearly independent. For each such $F$, we have $\Gamma = B_{d-1}(F; \setz)$. Recall that for any lattice $\Gamma \subseteq \setz^n$, we have
\[\abs{(\setz^n/\Gamma)_\tors} = [(\Gamma \otimes \setr) \cap \setz^n : \Gamma].\]
Therefore
\begin{multline*}\abs{\tilde H_{d-1}(F; \setz)_\tors} = \abs{(Z_{d-1}(F;\setz)/B_{d-1}(F;\setz))_\tors} \\ = \abs{(Z_{d-1}(H;\setz)/B_{d-1}(F;\setz))_\tors} = [B_{d-1}(F;\setr) \cap Z_{d-1}(H;\setz) : B_{d-1}(F;\setz)] = h(F),
\end{multline*}
as desired.
\end{proof}

\begin{cor}
The number of lattice points in $\cZ_H$ is the sum of $\abs{\tilde H_{d-1}(F; \setz)_\tors}$ over all spanning hyperforests $F$ of $H$.
\end{cor}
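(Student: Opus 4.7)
The corollary is an immediate consequence of \cref{thm:zehr} obtained by evaluating the Ehrhart polynomial at $t=1$. My plan is therefore to simply observe that, by definition of $L(\cZ_H, t)$, the number of lattice points of $\cZ_H$ itself (relative to the lattice $Z_{d-1}(H;\setz)$ that we have fixed as the integral structure on the ambient space) equals $L(\cZ_H,1)$. Substituting $t = 1$ into the formula
\[L(\cZ_H, t) = \sum_{F \subseteq H} \abs{\tilde H_{d-1}(F; \setz)_\tors}\, t^{\abs{E(F)}}\]
collapses the weighting $t^{\abs{E(F)}}$ to $1$ and leaves exactly the claimed sum over spanning hyperforests.

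There is essentially no obstacle here; the only thing worth double-checking is that the normalization of lattice/volume implicit in \cref{thm:zehr} (i.e.\ the fact that $Z_{d-1}(H;\setz)$ is treated as the set of integral points in $Z_{d-1}(H;\setr)$, as set up just before the statement of the theorem) agrees with the one under which we are counting lattice points in the corollary. This is indeed guaranteed by the setup paragraph preceding \cref{thm:zehr}, so no further work is needed. I would write the proof as a one-line invocation of \cref{thm:zehr} with $t=1$.
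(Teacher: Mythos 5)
Your proof is correct and is exactly the paper's argument: the corollary is obtained by setting $t=1$ in \cref{thm:zehr}, and your extra remark about the lattice normalization is a harmless sanity check already handled in the setup. Nothing further is needed.
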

\begin{proof}
This is \cref{thm:zehr} with $t = 1$.
\end{proof}

We are now ready to deduce Theorem \ref{thm:main}. 

\begin{proof}[Proof of Theorem \ref{thm:main}]
The volume of $\cZ_H$ is the sum of $\abs{H_{d-1}(T; \setz)}$ for all spanning hypertrees $T$ of $H$. It is well-known (and easy to see by considering the $t\to\infty$ asymptotic), that for $\cZ \in \setr^n$, the volume of $\cZ_H$ is the $t^n$ coefficient in $L(\cZ, t)$. In this case, we have $\dim \tilde Z_{d-1}(H, \setr) = \binom{n-1}{d}$, so the hyperforests that contribute to the leading coefficient are exactly the hypertrees (see \cite[Prop.~2]{Kalai83}). Moreover, for any $d$-dimensional hypertree $T$, the group $\tilde H_{d-1}(T; \setz)$ is finite, so all of its elements are torsion. The result follows.
\end{proof}

\section{Sign Patterns of Coboundaries and the Face Lattice}
In this section we develop a combinatorial descriptions of the faces of $\cZ_H$.

Consider a simplicial complex $X$ and an integer $k \geq 0$, let $\tilde X_k$ denote the set of ordered $(k+1)$-tuples that are orderings of elements of $X_k$. Call a function $\sigma \colon \tilde X_k \to \set{-1,0,1}$ a \vocab{$k$-sign pattern} (or \vocab{sign pattern} if $k$ is understood) of $X$ if for every permutation $\pi$ of $\set{0,1,\ldots,k}$ and $(v_0,v_1,\ldots,v_k) \in\tilde X_k$, we have
\[\sigma(v_{\pi(0)},v_{\pi(1)},\ldots,v_{\pi(k)}) = \sgn \pi \cdot \sigma(v_0,v_1,\ldots,v_k).\]
Call a sign pattern \vocab{proper} if its range does not contain $0$.

Given a coboundary $\alpha \in B^k(X; \setr)$, one can define a sign pattern $\sgn \alpha$ to be the function $\sigma$ defined by
\[\sigma(v_0,v_1,\ldots,v_k) = \sgn\paren[\big]{\alpha((v_0v_1\cdots v_k))};\]
call a sign pattern $\sigma$ \vocab{valid} if there is some cocycle $\alpha$ with $\sgn\alpha = \sigma$. Finally, say a sign pattern $\sigma$ \emph{refines} a sign pattern $\tau$ (denoted $\sigma \succeq \tau$) if for all $v \in \tilde X_k$ with $\tau(v) \neq 0$, we have $\sigma(v) = \tau(v)$.

\begin{thm} \label{thm:face}
Let $H$ be a $(d+1)$-uniform hypergraph. Under refinement, the valid $d$-sign patterns of $H$ form a lattice that is isomorphic to the face lattice of $\cZ_H$ ($\succeq$ corresponds to $\subseteq$). Under this isomorphism, the dimension of a face corresponding to a valid sign pattern $\sigma$ is
\[\dim \spanop \setmid[\big]{\partial_d((v_0v_1 \cdots v_d))}{\sigma(v_0,v_1,\ldots,v_d) = 0}.\]
\end{thm}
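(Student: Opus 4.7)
The plan is to invoke the standard description of the face lattice of a zonotope and translate the resulting data into the homological language of the statement. Recall that for a zonotope $\cZ = \sum_{i=1}^{r} \conv(\set{0,\beta_{i}})$ in a finite-dimensional real vector space $V$, any linear functional $\ell \in V^{*}$ is maximized on the face
\[F_{\ell} = \setmid[\Big]{\sum_{i} x_{i} \beta_{i}}{x_{i} = 1 \text{ if } \ell(\beta_{i}) > 0,\; x_{i} = 0 \text{ if } \ell(\beta_{i}) < 0,\; x_{i} \in [0,1] \text{ if } \ell(\beta_{i}) = 0},\]
every face arises in this way, and $F_{\ell} = F_{\ell'}$ if and only if $\ell$ and $\ell'$ induce the same sign vector on $(\beta_{1}, \ldots, \beta_{r})$. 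Consequently the face lattice of $\cZ$ is in inclusion-preserving bijection with the set of realized sign vectors, ordered so that vectors with more zeros correspond to larger faces.

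Next I would identify the functionals on $V = Z_{d-1}(H;\setr)$ homologically. Every functional on $V = \ker \partial_{d-1}$ is the restriction of some $\phi \in C^{d-1}(H;\setr)$, and by the adjoint relation $(d_{d-2}\psi)(c) = \psi(\partial_{d-1} c)$, two extensions agree on $\ker \partial_{d-1}$ if and only if their difference lies in $\im d_{d-2} = B^{d-1}(H;\setr)$, so $V^{*} \cong C^{d-1}(H;\setr)/B^{d-1}(H;\setr)$. Evaluating $\phi$ on a segment generator $\beta_{\set{v_{0},\ldots,v_{d}}} = \partial_{d}((v_{0}v_{1}\cdots v_{d}))$ yields $(d_{d-1}\phi)((v_{0}v_{1}\cdots v_{d}))$, so the sign vector of $\phi$ on the generators is exactly $\sgn(d_{d-1}\phi)$. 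As $\phi$ varies over $C^{d-1}$, its coboundary sweeps out all of $B^{d}(H;\setr)$, so the realized sign vectors are precisely the valid $d$-sign patterns of $H$.

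Combining these ingredients gives the desired bijection. To finish, I would verify that $\sigma \succeq \tau$ corresponds to $F_{\sigma} \subseteq F_{\tau}$: every ordered edge nonzero under $\tau$ is fixed to the same sign by $\sigma$, while some edges free under $\tau$ may become fixed in $\sigma$, shrinking the face. The dimension formula is immediate because $F_{\sigma}$ is a translate of the subzonotope $\sum_{\sigma(v_{0},\ldots,v_{d}) = 0} \conv(\set{0, \partial_{d}((v_{0}v_{1}\cdots v_{d}))})$, whose dimension matches the span named in the statement; and the lattice structure on valid sign patterns is then inherited from the face lattice of $\cZ_{H}$. The step requiring the most care is the duality identification $(\ker \partial_{d-1})^{\perp} = \im d_{d-2}$ inside $C^{d-1}$ (a quick linear-algebra fact that must be applied carefully in the right direction); the rest is a direct transcription of standard zonotope face theory into homological language.
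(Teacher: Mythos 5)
Your proposal is correct and follows essentially the same route as the paper: both invoke the standard sign-vector description of zonotope faces, identify functionals on $Z_{d-1}(H;\setr)$ with cochains in $C^{d-1}(H;\setr)$ via restriction, and use the adjoint relation $\gen{\gamma,\partial_d((v_0\cdots v_d))} = d_{d-1}\gamma((v_0\cdots v_d))$ to conclude that the realized sign vectors are exactly the sign patterns of elements of $B^d(H;\setr)$, i.e.\ the valid sign patterns. The only cosmetic difference is that you additionally record the kernel of the restriction map (identifying $Z_{d-1}(H;\setr)^\vee$ with $C^{d-1}/B^{d-1}$), which the paper does not need since surjectivity of restriction suffices.
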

\begin{proof}
As with the proof of \cref{thm:zehr}, we appeal to a more general result about zonotopes.
\begin{thm}[{see e.g.\ \cite[Sec.~7.3]{Ziegler}}] \label{thm:zonoface}
If $\cZ$ is the Minkowski sum of the line segments $\conv(\set{0, \beta_i})$ for $\beta_1, \beta_2, \ldots, \beta_r \in \setr^n$, then the faces of $\cZ$ are in bijection with tuples of the form 
\[(\sgn (\gen{\gamma,\beta_1}),\ldots,\sgn (\gen{\gamma,\beta_r})) \in \set{-1,0,1}^r\]
for $\gamma \in \setr^n$. Moreover,
\begin{itemize}
\item The dimension of a face corresponding to a tuple $(s_1,\ldots,s_r)$ is $\dim\spanop\setmid{\beta_i}{s_i = 0}$.
\item A face corresponding to a tuple $s \in \set{-1,0,1}^r$ contains a face corresponding to a tuple $s' \in \set{-1,0,1}^r$ if and only if $s$ is obtained from $s'$ by changing some coordinates to zeroes.
\end{itemize}
\end{thm}
Fix an arbitrary ordering of the vertices of every edge of $H$. Then, \cref{thm:zonoface} shows that the face lattice of $\cZ_H$ is isomorphic to the lattice
\[L = \setmid[\Big]{\paren[\Big]{\sgn \paren[\big]{\gen[\big]{\gamma,\partial_d((v_0v_1\cdots v_d))}}}_{(v_0,\ldots,v_d) \in E(H)}}{\gamma \in Z_{d-1}(H;\setr)^\vee} \subseteq \set{-1,0,1}^{E(H)},\]
where $V^\vee$ denotes the dual of a vector space $V$. It is equivalent to consider $\gamma \in C_{d-1}(H; \setr)^\vee = C^{d-1}(H; \setr)$. However, by definition,  
\[\gen[\big]{\gamma,\partial_d((v_0v_1\cdots v_d))} = d_{d-1} \gamma((v_0v_1 \cdots v_d)),\]
so in fact
\[L = \setmid[\Big]{\paren[\Big]{\sgn \paren[\big]{\alpha((v_0v_1\cdots v_d))}}_{(v_0,\ldots,v_d) \in E(H)}}{\alpha \in B^d(H;\setr)}.\]
A $d$-sign pattern of $H$ is uniquely determined by its values at each of its edges under our fixed ordering, so $L$ is equivalent to the set of valid sign patterns. The partial order described in the second bullet of \cref{thm:zonoface} translates to our notion of refinement, while $\dim \spanop \setmid{\beta_i}{s_i = 0}$ translates to
\[\dim \spanop \setmid[\big]{\partial((v_0v_1 \cdots v_d))}{\sigma(v_0,v_1,\ldots,v_d) = 0}.\qedhere\]
\end{proof}
\begin{cor}
The vertices of $\cZ_H$ are in bijection with the valid proper sign patterns of $B^d(H; \setr)$. Two vertices are connected by an edge if and only if their corresponding patterns differ in only one edge of $H$.
\end{cor}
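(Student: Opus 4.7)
The plan is to derive both claims from \cref{thm:face}. For the vertex claim, vertices of $\cZ_H$ are its $0$-dimensional faces, so by \cref{thm:face} they correspond bijectively to valid sign patterns $\sigma$ with $\dim \spanop \setmid{\partial_d((v_0v_1\cdots v_d))}{\sigma(v_0,\ldots,v_d) = 0} = 0$. Each vector $\partial_d((v_0v_1\cdots v_d))$ is a nonzero alternating sum of $d+1$ distinct $(d-1)$-subsimplices, so this span is trivial exactly when no $H$-edge has $\sigma$-value zero, i.e.\ when $\sigma$ is proper.

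For the forward direction of the edge claim, edges of $\cZ_H$ are its $1$-dimensional faces, and by the same theorem they correspond to valid sign patterns $\tau$ whose span above is $1$-dimensional. The $\partial_d$-vectors attached to distinct $H$-edges are pairwise linearly independent (their supports in $C_{d-1}$ are the $d$-subsets of the respective $(d+1)$-simplices, which differ), so this forces $\tau$ to have exactly one $H$-edge with value zero. By the refinement-equals-containment clause of \cref{thm:face}, the two vertex endpoints of such a $1$-dimensional face are the two proper refinements of $\tau$ obtained by filling its zero entry with $\pm 1$; these vertex patterns agree everywhere except on that single $H$-edge.

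For the converse, suppose valid proper sign patterns $\sigma_1, \sigma_2$ agree off a single $H$-edge $e$, and define $\tau$ to agree with $\sigma_1$ off $e$ and to vanish on $e$. I will show $\tau$ is valid; once this is done, the previous paragraph identifies the corresponding $1$-dimensional face as an edge of $\cZ_H$ with endpoints $\sigma_1$ and $\sigma_2$. Picking $\alpha_1, \alpha_2 \in B^d(H; \setr)$ with $\sgn \alpha_i = \sigma_i$, for every $H$-edge $e' \neq e$ the values that $\alpha_1$ and $\alpha_2$ take on a fixed ordering of $e'$ share a common nonzero sign, while on $e$ they have opposite nonzero signs. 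An intermediate value argument yields a unique $t \in (0,1)$ for which $\alpha := (1-t)\alpha_1 + t\alpha_2$ vanishes on $e$, and by linearity $\alpha \in B^d(H; \setr)$ with $\sgn \alpha = \tau$. This convex-combination existence is the only substantive step, and it is elementary.
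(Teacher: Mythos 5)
Your proposal is correct and takes essentially the same approach as the paper: both arguments read off vertices and edges of $\cZ_H$ from the dimension formula in \cref{thm:face} using the fact that the vectors $\partial_d((v_0v_1\cdots v_d))$ are nonzero and pairwise non-proportional, and both prove the converse of the edge claim by exhibiting a positive combination $t_1\alpha_1+t_2\alpha_2$ whose sign pattern has a single zero. Your intermediate-value phrasing just makes explicit the ``suitable choice of $t_1,t_2>0$'' that the paper asserts.
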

\begin{proof}
Observe that the expressions $\partial_d((v_0v_1 \cdots v_d))$ for $\set{v_0,\ldots,v_d} \in E(H)$ are nonzero and no two (corresponding to distinct edges) are multiples of each other. As a result, under the bijection of \cref{thm:face}, vertices of $\cZ_H$ correspond to sign patterns with no zeroes, while edges of $\cZ_H$ correspond to sign patterns with exactly one zero (up to permutation of inputs). This immediately shows the first half of the corollary.

To show the second half, observe that if two vertices share a common edge, they are refinements of a sign pattern with one zero and must differ in only one edge of $H$. Conversely, suppose two proper valid sign patterns $\sigma_1, \sigma_2$ differ in only one edge $e$. It suffices to show that the sign pattern $\sigma_0$ with $\sigma_0(e) = 0$ but otherwise equal to $\sigma_1, \sigma_2$ must be valid, as it will correspond to an edge containing the vertices corresponding to $\sigma_1$ and $\sigma_2$. This is true as if $\sigma_1 = \sgn \alpha_1$ and $\sigma_2 = \sgn \alpha_2$, we must have $\sigma_0 = \sgn (t_1\alpha_1 + t_2\alpha_2)$ for some suitable choice of $t_1,t_2 > 0$.
\end{proof}

In the case when $H$ is the complete $(d+1)$-uniform hypergraph $K_{n}^{(d+1)}$, it turns out that the vertices of $\cZ_H=\cA_{n,d}$ are in one to one correspondence with the $d$-dimensional acyclic hypertournaments on $n$ vertices, introduced by Linial and Morganstern in \cite{LinMor}. In the language of this paper, these may be defined as follows: a \vocab{$d$-dimensional hypertournament} $T$ on $n$ vertices is defined by a set of oriented edges $E(T) \subseteq C_d(K^{(d+1)}_n; \setz)$ obtained by adding exactly one of $\set{(v_0v_1\cdots v_d), -(v_0v_1\cdots v_d)}$ for every $\set{v_0,v_1,\ldots,v_d} \subseteq [n]$. A hypertournament $T$ is called \vocab{acyclic} if the convex cone spanned by $E(T)$ does not contain a nonzero cycle, i.e.\ a nonzero element of $Z_d(K^{(d+1)}_n; \setr)$.
\begin{cor}
The vertices of $\cA_{n,d}$ are in bijection with the $d$-dimensional acyclic hypertournaments on $n$ vertices. Two vertices are connected by an edge if and only if they differ in the orientation of only one edge.
\end{cor}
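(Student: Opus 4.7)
The plan is to show the bijection between proper valid $d$-sign patterns of $K_n^{(d+1)}$ (from the previous corollary) and $d$-dimensional acyclic hypertournaments on $n$ vertices, and to verify that differing in one oriented edge corresponds to differing in one unoriented edge of $H$.

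First I would set up the bijection on the level of sign patterns. A proper $d$-sign pattern $\sigma\colon \tilde X_d \to \set{-1,+1}$ on $K_n^{(d+1)}$ is the same data as a hypertournament $T$: for each $\set{v_0,\ldots,v_d}\subseteq [n]$, put $(v_0v_1\cdots v_d) \in E(T)$ iff $\sigma(v_0,v_1,\ldots,v_d) = +1$; antisymmetry of $\sigma$ guarantees this is well-defined. Crucially, since $K_n^{(d+1)}$ has exactly one simplex per unoriented $d$-simplex, the set $E(T)$ is a basis of $C_d(K_n^{(d+1)};\setr)$.

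Under this bijection, I would show validity of $\sigma$ is equivalent to acyclicity of $T$. Validity means $\sigma = \sgn(d_{d-1}\beta)$ for some $\beta \in C^{d-1}(H;\setr)$, i.e.\ we can find $\beta$ with $d_{d-1}\beta(\vec{e}) > 0$ for every $\vec e \in E(T)$. By Gordan's theorem applied to the linear map $\beta \mapsto (d_{d-1}\beta(\vec e))_{\vec e \in E(T)}$, exactly one of the following holds: either such $\beta$ exists, or there are coefficients $c_e \geq 0$, not all zero, with $\sum_e c_e \partial_d\vec e = 0$, i.e.\ $\sum_e c_e \vec e \in Z_d(H;\setr)$. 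Since this element lies in the cone spanned by $E(T)$, the second alternative says precisely that $T$ is not acyclic; moreover, such a nontrivial relation forces the element to be nonzero because $E(T)$ is a basis of $C_d$. Hence $\sigma$ is valid iff $T$ is acyclic.

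The edge statement is then immediate from the edge statement of the previous corollary: two vertices of $\cA_{n,d}$ are adjacent iff the corresponding proper valid sign patterns differ in only one unoriented edge of $H$, which under the bijection means that the two hypertournaments differ in the orientation of exactly that edge. The main obstacle here is just the Gordan-type argument in the second step; once one notices that $E(T)$ forms a basis (so the cone in question is simplicial and linear relations force trivial coefficients), both directions drop out immediately from standard LP duality, with no further combinatorial input needed.
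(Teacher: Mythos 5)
Your proof is correct, and it follows the same overall skeleton as the paper's: identify proper sign patterns with hypertournaments, prove that validity corresponds to acyclicity, and read off the edge statement from the preceding corollary. The one genuine difference is in the key step. The paper proves the two implications separately: for ``valid implies acyclic'' it notes that a coboundary $\alpha$ positive on $E(T)$ vanishes on all cycles, so the cone over $E(T)$ meets $Z_d$ only at $0$; for the converse it separates the (compact) convex hull of $E(T)$ from the subspace $Z_d(K^{(d+1)}_n;\setr)$ by a hyperplane and then invokes the homological fact that a functional vanishing on $Z_d$ is a coboundary (which requires its own small argument, relegated to a footnote). You instead apply Gordan's theorem of the alternative directly to the vectors $\partial_d \vec e$ for $\vec e \in E(T)$, viewed as the image of $E(T)$ under $\partial_d$ in $C_{d-1}$: either some $\beta \in C^{d-1}$ is positive on all of them (which is exactly validity, since coboundaries are by definition the functionals $d_{d-1}\beta = \beta \circ \partial_d$), or a nonnegative nontrivial combination of them vanishes (which, since $E(T)$ is a basis of $C_d$, is exactly a nonzero cycle in the cone). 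This packaging handles both directions at once and entirely sidesteps the ``vanishing on $Z_d$ equals coboundary'' lemma, because you never leave the space of genuine coboundaries. The observation that $E(T)$ is a basis of $C_d(K^{(d+1)}_n;\setr)$, so that a nontrivial nonnegative relation automatically produces a \emph{nonzero} cycle, is the small extra point your route needs, and you supply it. Of course Gordan's theorem is itself a separation statement, so the two arguments are cousins rather than strangers, but yours is arguably the cleaner of the two.
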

\begin{proof}
Note that $d$-dimensional hypertournaments are naturally in bijection with proper sign patterns of $K_{n}^{(d+1)}$, where we associate $T$ with the unique sign pattern $\sigma$ such that
\[(v_0v_1\cdots v_d) \in T \iff \sigma(v_0, \ldots, v_d) = 1.\]
It suffices to show that acyclic hypertournaments correspond to valid sign patterns.

If $T$ corresponds to $\sgn\alpha$ for $\alpha \in B^d(K^{(d+1)}_n; \setr)$, then every element of $E(T)$ must lie in the open half-space $\setmid{\beta \in C_d(K^{(d+1)}_n; \setr)}{\alpha(\beta) > 0}$. Since $\alpha$ sends all cycles to zero, we conclude that the only cycle in the convex cone generated by $E(T)$ is $0$.

Conversely, suppose $T$ is acyclic. Then, the convex hull of $E(T)$ is compact and disjoint from the linear subspace $Z_d(K^{(d+1)}_n; \setr)$. Hence, by the hyperplane separation theorem, there must exist a linear functional $\alpha \in C^d(K^{(d+1)}_n; \setr)$ that vanishes on $Z_d(K^{(d+1)}_n; \setr)$ but is positive on every element of $E(T)$. Vanishing on $Z_d(K^{(d+1)}_n; \setr)$ is equivalent to being a coboundary,\footnote{This is a basic fact in homological algebra over a field. For a quick proof, note that $\alpha$ descends to a linear functional on $C_d(K^{(d+1)}_n;\setr)/Z_d(K^{(d+1)}_n;\setr) \cong B_{d-1}(K^{(d+1)}_n;\setr)$, so there exists some $\gamma \in B_{d-1}(K^{(d+1)}_n;\setr)^\vee$ such that $\alpha = \gamma \circ \partial_d$. We then extend $\gamma$ arbitrarily to $C_{d-1}(K^{(d+1)}_n;\setr)$ to yield a map $\gamma'$ such that $\alpha = d_{d-1}\gamma'$.} so $T$ corresponds to the valid proper sign pattern $\sgn \alpha$.
\end{proof}
\begin{rmk}
This proof is a variant of an argument in \cite[Sec.~2.1]{LinMor} showing that acyclic hypertournaments are in bijection with regions of a certain hyperplane arrangement.
\end{rmk}

\section{Concluding Remarks}
\subsection{Duality}
It is a result of Kalai \cite{Kalai83} that if $n$ and $d$ are positive integers such that $n \geq d+3$, then given any $d$-dimensional hypertree $T$ on $n$ vertices there is a dual $(n-d-2)$-dimensional hypertree $T'$ on $n$ vertices such that $\tilde H_{d-1}(T; \setz) \cong \tilde H_{n-d-3}(T'; \setz)$. It follows from \cref{thm:main} that $\cA_{n,d}$ and $\cA_{n,n-d-2}$ have the same volume.

At the same time, for any $(d+1)$-uniform $H$ on $n$ vertices, we have $\dim Z_{d-1}(H; \setr) = \binom{n-1}{d}$. Therefore $\cA_{n,d}$ and $\cA_{n,n-d-1}$ live in spaces with the same dimension. This is a curious off-by-one discrepancy, which could be interesting to explore further.

\subsection{Facets of \texorpdfstring{$\cA_{n, d}$}{A\_(n,d)}}
In light of the relation between the vertices of $\cA_{n,d}$ and acyclic hypertournaments, it it is plausible that other combinatorial aspects of $\cA_{n,d}$ have interesting properties as well. One possible object of study are the facets of $\cA_{n,d}$, which by \cref{thm:face}, correspond to the nonzero valid sign patterns with a maximal set of zeroes.

The facets of $\cA_{n,1}$ correspond to the $2^n-2$ ways to split the $n$ vertices into two nonempty sets $A$ and $B$, with the corresponding sign pattern $\sigma$ being given by $\sigma(a,b) = 1$ and $\sigma(b,a) = -1$ for $a\in A$ and $b \in B$ and zeroes everywhere else. The natural generalization to partitions of $[n]$ into $d+1$ nonempty sets yields a family of facets of $\cA_{n,d}$, but this characterization is not complete. For example, $\cA_{5,2}$ has a facet corresponding to the sign pattern $\sigma$ given by
\[\sigma(v_i, v_{i+1}, v_{i+2}) = 1\quad\text{and}\quad\sigma(v_i, v_{i+1}, v_{i+3}) = 0,\]
where indices are taken modulo $5$.

\section*{Acknowledgments}
We would like to thank Maya Sankar for helpful conversations, as well as Ernie Croot and the Fulton County Superior Court for making this collaboration possible. 

C.P.\ was supported by NSF Award DMS-2246659. D.Z.\ was supported  by the NSF Graduate Research Fellowships Program (grant number: DGE-2039656). 

\printbibliography
\end{document}